\documentclass[11pt]{siamltex}

\usepackage[english]{babel}
\usepackage[dvips]{graphicx}
\usepackage{amssymb}
\usepackage{bm}
\usepackage{amsmath}
\usepackage{xcolor}
\usepackage{url}
\usepackage{hyperref}

\newcommand{\RR}{\mathbb{R}}

\newcommand{\PP}{\mathbb{P}}
\def\C{\mathcal{C}}

\title{On the error of best polynomial approximation of composite functions \thanks{This research has been accomplished within ``Research ITalian network on Approximation'' (RITA).}}

\author{
Luisa Fermo\thanks{Department of Mathematics and Computer Science, University of Cagliari,
Via Ospedale 72, 09124 Cagliari, Italy, email: fermo@unica.it. \thanks{Corresponding author}}
\and
Concetta Laurita \thanks{Department of Mathematics, Computer Science and Economics, University of Basilicata, Viale dell'Ateneo Lucano 10, 85100 Potenza, Italy, email: concetta.laurita@unibas.it, mariagrazia.russo@unibas.it}
\and
Maria Grazia Russo \footnotemark[3]
}

%\fntext[GNCS]{}

\begin{document}
\maketitle

\begin{abstract}
The purpose of the paper is to provide a characterization of the error of the best polynomial approximation of composite functions in weighted spaces. Such a characterization is essential for the convergence analysis of numerical methods applied to non-linear problems or for numerical approaches that make use of regularization techniques to cure low smoothness of the solution.
This result is obtained through  an estimate of the derivatives of composite functions in weighted uniform norm.
\end{abstract}
\begin{keywords}
Derivatives of composite functions, weighted spaces, Sobolev-type spaces, best polynomial approximation
\end{keywords}
\begin{AMS}
41A10, 41A25, 26D10
\end{AMS}

\section{Introduction}
Consider a fixed Jacobi weight
$$u(x)=v^{\gamma,\delta}(x):=(1-x)^\gamma(1+x)^\delta, \qquad x\in (-1,1), \quad \gamma,\delta\ge 0.$$
We denote by $C_u$ the space of all functions $f$ continuous on $(-1,1)$ and satisfying
\begin{equation*}
	\lim_{x\rightarrow +1}f(x)u(x)=0\quad \mbox{if $\gamma>0$},
	\qquad \mbox{and}
	\qquad
	\lim_{x\rightarrow -1}f(x)u(x)=0\quad \mbox{if $\delta>0$},
\end{equation*}
equipped with the norm
\[
\|f\|_{C_u}:=\|fu\|_\infty= \max_{x\in [-1,1]}|f(x)|u(x).
\]
It is well-known (see for instance \cite{mastromilobook}) that the Weierstrass approximation theorem holds true in this Banach space and we have
\[
f\in C_u\ \Longleftrightarrow \ \lim_{m\rightarrow \infty} E_m(f)_u=0,
\]
where $E_m(f)_u$  denotes the error of best approximation of $f\in C_u$ in the space $\PP_m$ of all algebraic polynomials of degree at most $m$, namely
\[
E_m(f)_u:=\inf_{P\in\PP_m}\|(f-P)\|_{C_u}.
\]
The rate of convergence of such error as $m\rightarrow\infty$ depends on the smoothness of the function $f$.  A renewed inequality used in estimating the best approximation is the following Favard type inequality
(see for instance \cite{mastromilobook} and the references therein)
\begin{equation}\label{favard}
	E_m(f)_u \leq \frac{\mathcal{C}}{m^r}\|f^{(r)}\varphi^r u\|_\infty
	\end{equation}
where here and in the following $\mathcal{C}$ is a positive absolute constant, $f^{(r)}$ denotes the $r$th derivative of $f$ and $\varphi(x)=\sqrt{1-x^2}$.

In different contexts in which the ``global approximation" (i.e. in processes involving the polynomial approximation) is used, it becomes crucial to estimate  the best polynomial approximation of composed functions. This situation obviously occurs in non linear problems, but also in other contexts like for instance in methods using regularization techniques (see, for instance, ~\cite{diogo2017},~\cite{FermoRusso},~\cite{galperin2000},
~\cite{MonScu1998},~\cite{Pedas2006bis}, and ~\cite{vainikko2008}).

Looking at the Favard inequality, in order to estimate $E_m(f \circ g)_u$ we need an upper bound for the weighted norm of the derivatives of the composite functions. A crucial step for our aim  is to provide, in weighted uniform norm, an estimate of the derivatives of a given function in terms of a fixed higher derivative of the same function. According to our knowledge, this is only known for functions belonging to  $C^{r}([a,b])$. In fact, in \cite[Lemma 2.1]{Ditzian} for a given function $f \in C^{r}([a,b])$ the following inequality was stated
\begin{equation} \label{EstDerDitzian}
\|f^{(k)}\|_\infty \leq \mathcal{C} \left( \frac{\|f\|_\infty}{(b-a)^k} +(b-a)^{r-k}\|f^{(r)}\|_\infty \right), \quad 0<k< r,
\end{equation}
where $\mathcal{C}$ is a positive constant independent of the interval $[a,b]$ and $f$.

In this paper, we first prove the analogous of (\ref{EstDerDitzian}) in the weighted case and then we give the estimate of the derivative of a composite function in a very general case, i.e. the case in which $f$ is a multivariate function and $g$ is a vector of functions.
We will present the results in $[-1,1]$, without loss of generality, since by linearity  analogous results can be deduced in the generic interval $[a,b]$ of $\mathbb{R}$.

\section{Sobolev-type spaces}
Let us introduce the weighted Sobolev--type space of order $1 \leq r \in \mathbb{N}$ \cite{mastromilobook}
\begin{equation*}
{W}^r_{u}=\left\{f \in C_{u}: f^{(r-1)} \in AC((-1,1)), \,  \|f^{(r)}\varphi^r u \|_\infty <\infty \right\},
\end{equation*}
where $\varphi(x)=\sqrt{1-x^2}$ and $AC((-1,1))$ denotes the set of all absolutely continuous functions on $(-1,1)$.
We equip $W^r_{u}$ with the norm
$$\|f\|_{W^r_{u}}:=\|fu\|_\infty+\|f^{(r)}\varphi^r u \|_\infty.$$

For multivariate functions $f:\Omega \rightarrow \RR$ with $\Omega$ open subset of $\RR^n$, we define the Sobolev space ${\bm W}^{r}(\Omega)$ \cite{brezisbook}
as the set of all functions $f$ in $\Omega$ such that for every $n$-tuples of nonnegative integers $\ell=(\ell_1,\ell_2,\dots, \ell_n)$, with $|\ell|=\sum_{i=1}^n \ell_i \leq r$, the mixed partial derivatives
$$D^{\ell} f = \frac{\partial^{\ell_1+\ell_2+\dots+\ell_n} f}{\partial x_1^{\ell_1} \partial x_2^{\ell_2}\dots \partial x_n^{\ell_n}}$$
exist and $\|D^{\ell} f\|_\infty <\infty$ .
We endow this space with the norm
\begin{equation}\label{SobolevNormMultiD}
\|f\|_{{\bm W}^{r}(\Omega)}=\|f\|_\infty+ \sum_{1 \leq |\ell| \leq r} \|D^{\ell} f\|_\infty.
\end{equation}
The spaces $(W^r_u, \| \cdot\|_{W^r_u})$ and $({\bm W}^{r}(\Omega), \| \cdot\|_{{\bm W}^r(\Omega)})$  are Banach spaces.
\section{The Faa di Bruno's formula}
Let $f: \Omega \subseteq \RR^n \rightarrow \RR$ and $g: A\subseteq \RR \rightarrow \RR^n$ with $g(x)=\left(g_1(x),g_2(x),\ldots,g_n(x)\right)$,  be functions such that the range of $g$ is contained in the domain of $f$ and for which all the necessary derivatives are defined. \newline
In the case $n=1$ the well-known F\'aa di Bruno formula states
\begin{equation}\label{faa1}
(f \circ g)^{(r)}(x)=\sum \frac {r!}{k_1!k_2!\ldots k_r!}f^{(k1+k2+\ldots+k_r)}(g(x))\prod_{i=1}^{r}\left( \frac{g^{(i)}(x)}{i!} \right)^{k_i}
\end{equation}
where the sum is over all $r$-tuples of nonnegative integers $\left(k_1,k_2,\ldots,k_r\right)$ such that $k_1+2k_2+\ldots+rk_r=r.$

Combining the terms with the same value of $k_1+k_2+\ldots+k_r=k$ and noticing that $k_j$ has to be zero for $j>r-k+1$ lead to the following equivalent but simpler formula
\begin{equation*}
(f \circ g)^{(r)}(x)=\sum_{k=1}^r f^{(k)}(g(x)) \, {\bf{B}}_{r,k}(g'(x), g''(x),\dots g^{(r-k+1)}(x))
\end{equation*}
where ${\mathbf{B}}_{r,k}(x_1, x_2,\dots x_{r-k+1})$ are the partial or incomplete exponential Bell polynomials defined as
\begin{equation*}
{\bf{B}}_{r,k}(x_1, x_2,\dots x_{r-k+1})= \sum \frac{r!}{k_1! k_2! \dots k_{r-k+1}!} \prod_{i=1}^{r-k+1} \left( \frac{x_i}{i!} \right)^{k_i},
\end{equation*}
with the sum  taken over all sequences  $\left(k_1,k_2,\ldots,k_{r-k+1}\right)$ of nonnegative integers such that these two conditions are satisfied
\begin{equation*}
\sum_{i=1}^{r-k+1} k_i =k, \qquad \sum_{i=1}^{r-k+1} i k_i =r.
\end{equation*}
The formula \eqref{faa1} for the $r$-th derivative of a composite function can be generalized in the case $n>1$ as  (see \cite{Mishkov})
\begin{align} \label{Faa_multidimensional}
& (f \circ g)^{(r)}(x) \nonumber \\ & =\displaystyle\sum_0 \sum_1\sum_2 \cdots \sum_r \displaystyle \frac{r!}{\prod_{i=1}^r\prod_{j=1}^n  q_{ij}!\prod_{i=1}^r(i!)^{k_i}}\frac{\partial^kf(g(x))}{\partial x_1^{p_1} \partial x_2^{p_2} \cdots \partial x_n^{p_n}} \nonumber\\
 & \times \displaystyle  \prod_{i=1}^r\left(g_1^{(i)}(x)\right)^{q_{i1}}\left(g_2^{(i)}(x)\right)^{q_{i2}}\cdots\left(g_n^{(i)}(x)\right)^{q_{in}}
\end{align}
where the respective sums are over all negative solutions of the Diophantine equations, as follows
\begin{align} \label{Diophantine eq}
&\sum_0  \rightarrow  k_1+2k_2+\ldots+rk_r  =  r,\nonumber \\ \\
&\sum_i  \rightarrow q_{i1}+q_{i2}+\ldots+q_{in}  =  k_i,  \quad i=1.2,\ldots,r, \nonumber
\end{align}
and
\begin{align} \label{Diophantine eq2}
 p_j&=q_{1j}+q_{2j}+\ldots+q_{rj}, \quad j=1,2,\ldots,n, \nonumber \\ \\
k&=p_1+p_2+\ldots+p_n=k_1+k_2+\ldots+k_r. \nonumber
\end{align}

\section{Main results}
First, let us prove the following lemma which gives an estimate of the derivatives of a function $f \in W^r_u$, generalizing \eqref{EstDerDitzian}.\\

\begin{lemma}
Let $u(x)=(1-x)^{\gamma}(1+x)^\delta$ be a Jacobi weight with $0 \leq \gamma,\delta <1$ and $f \in W^r_u$. Then, for $0<k<r$
\begin{equation} \label{SeminormEstimater}
\|f^{(k)} \varphi^k u\|_\infty \leq \mathcal{C} \left(\|f u\|_\infty+\|f^{(r)} \varphi^r u\|_\infty \right)
\end{equation}
where $\mathcal{C}=\mathcal{C}(r,k,\gamma,\delta)$ is a positive constant independent of $f$.
\end{lemma}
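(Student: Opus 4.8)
The plan is to reduce the inequality to two standard weighted polynomial tools: the Favard inequality \eqref{favard}, already recorded in the introduction, and the weighted Bernstein--Markov inequality
\[
\|P^{(k)}\varphi^k u\|_\infty \le \mathcal{C}\, m^k \|Pu\|_\infty, \qquad P\in\PP_m,
\]
valid for Jacobi weights with the stated exponents (see \cite{mastromilobook}). First I would take, for each $j\ge 0$, a polynomial $P_{2^j}\in\PP_{2^j}$ of best approximation, so that $\|(f-P_{2^j})u\|_\infty=E_{2^j}(f)_u$. Since $f\in W^r_u\subset C_u$ we have $E_m(f)_u\to 0$, hence the telescoping series $f=P_{1}+\sum_{j\ge 0}(P_{2^{j+1}}-P_{2^j})$ converges in $C_u$, and the idea is to differentiate it $k$ times, controlling each block by Bernstein and each best-approximation error by Favard.

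Applying the Bernstein inequality to each block $P_{2^{j+1}}-P_{2^j}\in\PP_{2^{j+1}}$ and estimating $\|(P_{2^{j+1}}-P_{2^j})u\|_\infty\le E_{2^{j+1}}(f)_u+E_{2^j}(f)_u\le 2E_{2^j}(f)_u$, I would obtain
\[
\|(P_{2^{j+1}}-P_{2^j})^{(k)}\varphi^k u\|_\infty \le \mathcal{C}\,2^{(j+1)k}E_{2^j}(f)_u .
\]
The Favard inequality \eqref{favard} then gives $E_{2^j}(f)_u\le \mathcal{C}\,2^{-jr}\|f^{(r)}\varphi^r u\|_\infty$, so that each block is bounded by $\mathcal{C}\,2^{-j(r-k)}\|f^{(r)}\varphi^r u\|_\infty$. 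Summing over $j$, the geometric series converges precisely because $r-k\ge 1>0$, producing a constant $\mathcal{C}(r,k)$ times $\|f^{(r)}\varphi^r u\|_\infty$. For the base term, Bernstein with $m=1$ together with $\|P_1 u\|_\infty\le \|fu\|_\infty+E_1(f)_u\le 2\|fu\|_\infty$ yields $\|P_1^{(k)}\varphi^k u\|_\infty\le \mathcal{C}\|fu\|_\infty$. Collecting the two contributions gives \eqref{SeminormEstimater}.

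The step requiring the most care is the interchange of differentiation and summation: I must verify that the $k$-th derivative of the telescoping series equals $f^{(k)}$. This follows because on every compact subinterval $[-1+\varepsilon,1-\varepsilon]$ the factor $\varphi^k u$ is bounded below by a positive constant, so the estimate above forces the series of derivatives to converge uniformly on compacta, while the series of functions converges uniformly there as well; the classical theorem on term-by-term differentiation, applied successively for orders $1,\dots,k$ (legitimate since $f^{(r-1)}\in AC((-1,1))$ guarantees the existence and continuity of $f^{(k)}$ for $k<r$), then identifies the limit with $f^{(k)}$. The hypothesis $0\le\gamma,\delta<1$ enters only through the validity, in the uniform weighted setting $C_u$, of the Favard and Bernstein--Markov inequalities for these Jacobi exponents; once these are granted the remaining estimates are purely geometric and do not see the weight's exponents beyond the value of the constants.
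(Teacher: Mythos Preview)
Your argument is correct and complete, but it follows a genuinely different route from the paper's own proof. The paper proceeds directly: it first reduces \eqref{SeminormEstimater} by induction to the one-step inequality
\[
\|f^{(k)}\varphi^k u\|_\infty \le \mathcal{C}\bigl(\|fu\|_\infty+\|f^{(k+1)}\varphi^{k+1}u\|_\infty\bigr),
\]
and then proves the latter by writing the $k$-th forward difference via Taylor's formula with integral remainder, choosing a fixed step $h=\pm 1/k$ according to the half of $[-1,1]$ where $x$ lies, and estimating the resulting sums through elementary pointwise bounds on ratios such as $u(x)/u(x+jh)$ and $\varphi^k(x)/\varphi^{k+1}(x+t)$. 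This yields explicit (though not sharp) constants and is entirely self-contained: no polynomial inequality is invoked.

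Your approach instead treats the lemma as an interpolation inequality and deduces it from the two weighted polynomial tools already available in the paper's framework, namely the Favard inequality \eqref{favard} and the Bernstein--Markov inequality $\|P^{(k)}\varphi^k u\|_\infty\le\mathcal{C}m^k\|Pu\|_\infty$. The dyadic telescoping $f=P_1+\sum_{j\ge0}(P_{2^{j+1}}-P_{2^j})$, the block estimate $\mathcal{C}\,2^{-j(r-k)}\|f^{(r)}\varphi^r u\|_\infty$, and the summability for $r>k$ are all standard and correctly executed; your justification of term-by-term differentiation on compacta of $(-1,1)$ is adequate. The trade-off is clear: the paper's proof is more elementary and independent of external polynomial inequalities, while yours is shorter, conceptually transparent, and---since both Favard and Bernstein--Markov hold for all Jacobi exponents $\gamma,\delta\ge0$---actually shows that the restriction $\gamma,\delta<1$ is not needed for \eqref{SeminormEstimater}, which the paper's direct computation does use.
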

\begin{proof}
First we note that it is enough to prove that, for any $0<k<r$,
\begin{equation} \label{SeminormEstimatek}
\|f^{(k)} \varphi^k u\|_\infty \leq \mathcal{C} \left(\|f u\|_\infty+\|f^{(k+1)} \varphi^{k+1} u\|_\infty \right)
\end{equation}
with $\mathcal{C}=\mathcal{C}(k)$, since \eqref{SeminormEstimater} con be deduced from \eqref{SeminormEstimatek} by induction on $k$. \newline
Fix $-1 \leq x \leq 0$ and  $h=1/k$.  By using the Taylor formula with integral remainder, being $f^{(k)}$ locally absolutely continuous and consequently $f^{(k+1)}$ locally integrable, we can write
\begin{equation*}
f(x+jh)=\sum_{i=0}^k\frac{(jh)^i}{i!}f^{(i)}(x)+\frac{1}{k!}\int_{0}^{jh}(jh-t)^kf^{(k+1)}(x+t)dt
\end{equation*}
from which it follows
\begin{align*}
h^kf^{(k)}(x)& =\sum_{j=0}^k\binom{k}{j}(-1)^{k-j}f(x+jh) \\ & -\frac{1}{k!}\sum_{j=1}^k\binom{k}{j}(-1)^{k-j}\int_{0}^{jh}(jh-t)^kf^{(k+1)}(x+t)dt
\end{align*}
and, then,
\begin{eqnarray} \label{estimate1}
h^k |f^{(k)}(x)|\varphi^k(x)u(x)   & \leq & \sum_{j=0}^k\binom{k}{j}|f(x+jh)|\varphi^k(x)u(x) \nonumber\\
 &\hspace{-4cm} +&\hspace{-2cm}  \frac{1}{k!}\sum_{j=1}^k\binom{k}{j}\int_{0}^{jh}(j h-t)^k\left|f^{(k+1)}(x+t)\right|\varphi^k(x)u(x)dt.
\end{eqnarray}
Let us estimate the first term at the right-hand side. Since $\varphi^k(x) \leq 1$ and
\begin{equation*}
\frac{u(x)}{u(x+jh)} \leq \frac{(1-x)^{\gamma}}{(1-x-jh)^{\gamma}}\leq \frac{2^\gamma}{(1-jh)^{\gamma}},
\end{equation*}
being $-1 \leq x \leq 0$ and $h>0$, using $\displaystyle \binom{k}{j}\leq 2^k$ for any $j \in \{0,1,\ldots,k\}$, we obtain
\begin{eqnarray*}
\sum_{j=0}^k\binom{k}{j}|f(x+jh)|\varphi^k(x)u(x)  &\leq&  \|fu\|_{\infty}\sum_{j=0}^k\binom{k}{j}\frac{u(x)}{u(x+jh)}\varphi^k(x)  \nonumber \\
& \leq &  \|fu\|_{\infty}2^{\gamma+k}\sum_{j=0}^k \frac{1}{(1-jh)^{\gamma}} \nonumber\\
& \leq &   \|fu\|_{\infty}2^{\gamma+k}\int_0^k (1-zh)^{-\gamma}dz \\ & =& \|fu\|_{\infty}\frac{2^{\gamma+k}}{(1-\gamma)h}.
\end{eqnarray*}
In order to estimate the second sum in \eqref{estimate1}, we observe that for $-1 \leq x \leq 0$, $1 \leq j \leq k$, $h=1/k>0$ and $0 \leq t \leq jh$ the following inequalities hold true
\begin{align*}
\frac{\varphi^k(x)}{\varphi^{k+1}(x+t)} &\leq \frac {2^{\frac{k}{2}}}{t^{\frac{1}{2}}(jh-t)^{\frac{k}{2}+\frac{1}{2}}}, \\
\frac{u(x)}{u(x+t)} &\leq \frac{(1-x)^{\gamma}}{(1-x-t)^\gamma} \leq \frac{2^{\gamma}}{(1-t)^\gamma} \leq \frac{2^{\gamma}}{(jh-t)^{\gamma}}.
\end{align*}
Consequently, we deduce
\begin{eqnarray} \label{estimate3}
& & \frac{1}{k!}\sum_{j=1}^k\binom{k}{j}\int_{0}^{jh}(j h-t)^k\left|f^{(k+1)}(x+t)\right|\varphi^k(x)u(x)dt  \nonumber \\ &\leq & \frac{1}{k!}\left\|f^{(k+1)}\varphi^{k+1}u\right\|_{\infty}  \sum_{j=1}^k\binom{k}{j}\int_{0}^{jh}(j h-t)^k
\frac{\varphi^k(x)}{\varphi^{k+1}(x+t)}\frac{u(x)}{u(x+t)}dt \nonumber \\ & \leq & \frac{2^{\frac{k}{2}}}{k!}+\gamma\left\|f^{(k+1)}\varphi^{k+1}u\right\|_{\infty}\sum_{j=1}^k\binom{k}{j}\int_{0}^{jh}
(j h-t)^{\frac{k-1}{2}-\gamma}t^{-\frac{1}{2}}dt  \nonumber\\
%& \leq & \frac{1}{k!}2^{\frac{k}{2}+\gamma}\left\|f^{(k+1)}\varphi^{k+1}u\right\|_{\infty}\sum_{j=1}^k\binom{k}{j}(jh)^{\frac{k}{2}-\frac{1}{2}}
%\int_{0}^{jh}(j h-t)^{-\gamma}t^{-\frac{1}{2}}dt  \nonumber\\
%&=& \frac{1}{k!}2^{\frac{k}{2}+\gamma}\left\|f^{(k+1)}\varphi^{k+1}u\right\|_{\infty}B\left(1-\gamma,\frac{1}{2}\right)
%\sum_{j=1}^k\binom{k}{j}(jh)^{\frac{k}{2}-\gamma}  \nonumber\\
&=& \frac{1}{k!}2^{\frac{k}{2}+\gamma}\left\|f^{(k+1)}\varphi^{k+1}u\right\|_{\infty}B\left(1-\gamma,\frac{1}{2}\right)
h^{\frac{k}{2}-\gamma}\sum_{j=1}^k\binom{k}{j}j^{\frac{k}{2}-\gamma},
\end{eqnarray}
where $B$ denotes the beta function. 

Now, we note that for any integer $k \geq 1$, it is $$\sum_{j=1}^k\binom{k}{j}j^{\frac{k}{2}-\gamma} \leq 2^k k^{\frac{k}{2}-\gamma},$$
from which, combining \eqref{estimate1}-\eqref{estimate3}, we get
\begin{eqnarray} \label{estimatenegativex}
& & \max_{-1 \leq x \leq 0}\left|f^{(k)}(x)\varphi^{k}(x)u(x)\right| \nonumber  \\ & \leq & \frac{1}{h^k}\left[\frac{2^{\gamma+k}}{(1-\gamma)h}\|fu\|_{\infty}+
\frac{1}{k!}2^{\frac{k}{2}+\gamma}B\left(1-\gamma,\frac{1}{2}\right)h^{\frac{k}{2}-\gamma}2^k k^{\frac{k}{2}-\gamma}\left\|f^{(k+1)}\varphi^{k+1}u\right\|_{\infty}\right] \nonumber \\
&=& \frac{2^{\gamma+k}}{1-\gamma}k^{k+1}\|fu\|_{\infty}+\frac{k^k}{k!}2^{\frac{3}{2}k+\gamma}B\left(1-\gamma,\frac{1}{2}\right)
\left\|f^{(k+1)}\varphi^{k+1}u\right\|_{\infty}.
\end{eqnarray}
The case $0 \leq x \leq 1$ and $h=-1/k$ can be treated with similar arguments and leads to the following estimate
\begin{eqnarray} \label{estimatepositivex}
\max_{0 \leq x \leq 1}\left|f^{(k)}(x)\varphi^{k}(x)u(x)\right|
&\leq & \frac{2^{\delta+k}}{1-\delta}k^{k+1}\|fu\|_{\infty} \nonumber \\ &+& \frac{k^k}{k!}2^{\frac{3}{2}k+\delta}B\left(1-\delta,\frac{1}{2}\right)
\left\|f^{(k+1)}\varphi^{k+1}u\right\|_{\infty}.
\end{eqnarray}
Finally, combining \eqref{estimatenegativex} with \eqref{estimatepositivex}, we can deduce \eqref{SeminormEstimatek} with
the constant $\mathcal{C}=\mathcal{C}(k)$ given by
\begin{align*}
\mathcal{C}(k) & =\max\left\{\frac{2^{\gamma+k}}{1-\gamma}k^{k+1}, \frac{2^{\delta+k}}{1-\delta}k^{k+1}, \frac{k^k}{k!}2^{\frac{3}{2}k+\gamma}B\left(1-\gamma,\frac{1}{2}\right), \right. \\ & \left.  \frac{k^k}{k!}2^{\frac{3}{2}k+\delta}B\left(1-\delta,\frac{1}{2}\right)
\right\}.
\end{align*}
(The exact constant which we arrived at are not important since they are not the best possible.)
\end{proof}

The previous lemma is crucial to prove our first main result.
\begin{theorem}
Let $f:\Omega \rightarrow \RR$ with $\Omega$ open subset of $\RR^n$ and $g:(-1,1) \rightarrow \RR^n$ such that $\mathrm{Im}(g) \subseteq \Omega$. Moreover, let $u(x)=(1-x)^{\gamma}(1+x)^\delta$ be a Jacobi weight with $0 \leq \gamma,\delta <1$. If we assume that $f \in  {\bm W}^r(\Omega)$ and $g=\left(g_1,g_2,\ldots,g_n\right)$ with $g_j \in W^r_u$, for $j=1,2,\ldots,n$, then $f\circ g \in W^r_{u^r}$ and
\begin{equation} \label{EstDerCompFun}
\displaystyle  \|(f \circ g)^{(r)} \varphi^ru^r\|_\infty \leq \mathcal{C} n^r B_r \|f\|_{{\bm W}^{r}(\Omega)}  \prod_{j=1}^n \|g_j\|_{W^r_u}^{s_j}
\end{equation}
where $B_r$ is the $r$-th Bell number,  $\C=\C(r,\gamma,\delta)$ is a positive constant independent of $f$ and $g$, and
\begin{equation} \label{exponentp}
s_j=\left\{\begin{array}{lcr} 0, & \quad \mathrm{if}  \, & \|g_j\|_{W^r_u} \leq 1 \vspace{0.1cm} \\

r, & \quad  \mathrm{if}  \,& \|g_j\|_{W^r_u} >1  \end{array} \right., \qquad j=1,2,\ldots,n.
\end{equation}
\end{theorem}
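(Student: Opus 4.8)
The plan is to start from the multivariate Fa\`a di Bruno formula \eqref{Faa_multidimensional}, multiply both sides by $\varphi^r u^r$, and estimate the resulting sum term by term via the triangle inequality. Each summand is a product of two ingredients: the mixed partial derivative $\frac{\partial^k f(g(x))}{\partial x_1^{p_1}\cdots\partial x_n^{p_n}}$ and the product $\prod_{i=1}^r\prod_{j=1}^n \bigl(g_j^{(i)}(x)\bigr)^{q_{ij}}$. The first factor is immediately controlled: for every $x$ it is one of the mixed partial derivatives $D^{\ell}f$ with $|\ell|=k\le r$, so its modulus is bounded by $\|D^{\ell}f\|_\infty\le\|f\|_{{\bm W}^r(\Omega)}$. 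Thus the entire difficulty is shifted onto the weighted product of the derivatives of the components $g_j$.

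The crucial step is the distribution of the weight $\varphi^r u^r$ among the factors $g_j^{(i)}$. I would exploit the relations \eqref{Diophantine eq}--\eqref{Diophantine eq2} to observe that $\sum_{i,j} i\,q_{ij}=\sum_i i k_i = r$ while $\sum_{i,j} q_{ij}=\sum_i k_i = k\le r$, so that one may factor
\begin{equation*}
\varphi^r u^r = \Bigl(\prod_{i=1}^r\prod_{j=1}^n (\varphi^i u)^{q_{ij}}\Bigr)\, u^{r-k}.
\end{equation*}
This matches every occurrence of $g_j^{(i)}$ with exactly the weight $\varphi^i u$ needed to invoke the previous Lemma: for $0<i<r$, inequality \eqref{SeminormEstimater} gives $\|g_j^{(i)}\varphi^i u\|_\infty\le\mathcal{C}\,\|g_j\|_{W^r_u}$, while for $i=r$ the factor $\|g_j^{(r)}\varphi^r u\|_\infty$ is already part of $\|g_j\|_{W^r_u}$. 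The leftover $u^{r-k}$ is harmless, since $0\le\gamma,\delta<1$ forces $u$ to be bounded on $[-1,1]$ and $r-k\ge 0$, whence $u^{r-k}\le\mathcal{C}(r,\gamma,\delta)$. Collecting the factors produces $\prod_{j=1}^n \|g_j\|_{W^r_u}^{p_j}$ with $p_j=\sum_i q_{ij}\le k\le r$; the definition \eqref{exponentp} then yields $\|g_j\|_{W^r_u}^{p_j}\le\|g_j\|_{W^r_u}^{s_j}$ in either case $\|g_j\|_{W^r_u}\le 1$ or $\|g_j\|_{W^r_u}>1$, which is precisely the product appearing in \eqref{EstDerCompFun}.

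It then remains to bound the numerical coefficients summed over all admissible multi-indices, and this is where the combinatorial bookkeeping yields the factor $n^r B_r$. For fixed $(k_1,\dots,k_r)$, the inner sums over the $q_{ij}$ subject to $\sum_j q_{ij}=k_i$ produce, by the multinomial theorem, a factor $\prod_i \frac{n^{k_i}}{k_i!}=\frac{n^{k}}{\prod_i k_i!}\le\frac{n^{r}}{\prod_i k_i!}$, using $k\le r$. The remaining outer sum $\sum_0$, namely $\sum \frac{r!}{\prod_i (i!)^{k_i} k_i!}$ over all $(k_1,\dots,k_r)$ with $\sum_i i k_i=r$, is exactly the $r$-th Bell number $B_r$ (one checks this directly for small $r$: the coefficients sum to $1,2,5,\dots$). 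Multiplying these bounds together, together with the $f$-estimate and the bounded Lemma constants raised to total exponent $\le r$, delivers the constant $\mathcal{C}\,n^r B_r$ of \eqref{EstDerCompFun}. Finiteness of the right-hand side simultaneously gives $f\circ g\in W^r_{u^r}$, the membership $f\circ g\in C_{u^r}$ following from boundedness of $f$ and the vanishing of $u^r$ at the endpoints, and the absolute continuity of $(f\circ g)^{(r-1)}$ following from the chain-rule structure together with $g_j^{(r-1)}\in AC$ and $f\in{\bm W}^r(\Omega)$.

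The step I expect to be the main obstacle is the weight-matching identity of the second paragraph: one must verify that the Diophantine constraints force the powers of $\varphi$ to sum to exactly $r$ and the powers of $u$ to $k\le r$, so that each derivative factor can be paired with the precise weight required by the Lemma and no deficit in $\varphi$ remains. Once this identity is secured, the $f$-part estimate and the Bell-number accounting are routine verifications.
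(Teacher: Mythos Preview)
Your proposal is correct and follows essentially the same route as the paper's proof: both start from the multivariate Fa\`a di Bruno formula, distribute $\varphi^r u^r$ among the factors $g_j^{(i)}$ via the Diophantine identities $\sum_{i,j} i\,q_{ij}=r$ and $\sum_{i,j} q_{ij}=k$, invoke the Lemma to bound each $\|g_j^{(i)}\varphi^i u\|_\infty$ by $\mathcal{C}\|g_j\|_{W^r_u}$, absorb the leftover $u^{r-k}$ into the constant, replace the exponents $p_j$ by $s_j$, and then use the multinomial theorem to collapse the inner $q$-sums into $n^k\le n^r$ and recognize the remaining outer sum as $B_r$. Your write-up is in fact slightly more explicit than the paper's in two places---you separate out the case $i=r$ (where the Lemma does not apply but the seminorm is already part of $\|g_j\|_{W^r_u}$) and you spell out why $u^{r-k}$ is bounded---and you add a sentence on the membership $f\circ g\in W^r_{u^r}$, which the paper leaves implicit.
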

\begin{proof}
Faa di Bruno's formula \eqref{Faa_multidimensional}, with \eqref{Diophantine eq}-\eqref{Diophantine eq2}, yields
\begin{align*}
 &   \left|(f \circ g)^{(r)}(x)\varphi^r(x)u^r(x)\right| \\ & \leq\sum_0 u^{r-k}(x)\sum_1\sum_2 \cdots \sum_r
 \frac{r!}{\prod_{i=1}^r\prod_{j=1}^n  q_{ij}!}\left|\frac{\partial^kf(g(x))}{\partial x_1^{p_1} \partial x_2^{p_2} \cdots \partial x_n^{p_n}}\right| u^{p_1+\ldots+p_n}(x) \\
& \times  \prod_{i=1}^r\frac{\left|g_1^{(i)}(x)\varphi^i(x)\right|^{q_{i1}}\left|g_2^{(i)}(x)\varphi^i(x)\right|^{q_{i2}}\cdots\left|g_n^{(i)}(x)
\varphi^i(x)\right|^{q_{in}}}{(i!)^{k_i}} \\
&  \leq   \C \sum_0 \sum_1\sum_2 \cdots \sum_r
\displaystyle \frac{r!}{\prod_{i=1}^r\prod_{j=1}^n  q_{ij}!}  \left|\frac{\partial^kf(g(x))}{\partial x_1^{p_1} \partial x_2^{p_2} \cdots \partial x_n^{p_n}}\right|
 \\ & \times \prod_{i=1}^r\frac{\left|g_1^{(i)}(x)\varphi^i(x)u(x)\right|^{q_{i1}}\left|g_2^{(i)}(x)\varphi^i(x)u(x)\right|^{q_{i2}}\cdots\left|g_n^{(i)}(x)
\varphi^i(x)u(x)\right|^{q_{in}}}{(i!)^{k_i}}.
\end{align*}
Then, taking into account \eqref{SeminormEstimater}, we have
\begin{equation*}
\left\|g_j^{(i)}\varphi^i u\right\|_{\infty} \leq \C_{i}\|g_j\|_{W^r_u}, \qquad i=1,2,\ldots,r, \quad j=1,2,\ldots,n,
\end{equation*}
and, in virtue of \eqref{SobolevNormMultiD}, we deduce
\begin{align*}
\left|(f \circ g)^{(r)}(x)\varphi^r(x)u^r(x)\right| & \leq \C \|f\|_{{\bm W}^r(\Omega)}\displaystyle\sum_0 \sum_1\sum_2 \cdots \sum_r \frac{r!}{\prod_{i=1}^r\prod_{j=1}^n  q_{ij}!} \\ & \times
\prod_{i=1}^r\displaystyle\frac{\C_i^{q_{i1}+q_{i2}+\ldots+q_{in}}\displaystyle \prod_{j=1}^n\left\|g_j\right\|_{W^r_u}^{q_{ij}}}
{(i!)^{k_i}}\\
& \leq \C \|f\|_{{\bm W}^r(\Omega)}\displaystyle\sum_0 \sum_1\sum_2 \cdots \sum_r \frac{r!}{\prod_{i=1}^r\prod_{j=1}^n  q_{ij}!} \\ & \times
 \prod_{j=1}^n\left\|g_j\right\|_{{W^r_u}}^{p_j}\prod_{i=1}^r\frac{\C_i^{k_i}}{(i!)^{k_i}}.
\end{align*}
Since $p_j \leq k \leq r$ for  $j=1,2,\ldots,n$, setting ${\bar \C}=\displaystyle \max_{i=1,2,\ldots,r}\C_i$ and multiplying and dividing by $k_1!k_2!\cdots,k_r!$, we can write
\begin{align*}
\left|(f \circ g)^{(r)}(x)\varphi^r(x)\right| & \leq \C{\bar \C}^r \|f\|_{{\bm W}^r(\Omega)} \prod_{j=1}^n\left\|g_j\right\|_{W^r_u}^{s_j}
\displaystyle \sum_0 \frac{r!}{k_1!k_2!\cdots,k_r!}\displaystyle \\ & \times \left[\prod_{i=1}^r\frac{1}{(i!)^{k_i}}\right]\sum_1\sum_2 \cdots \sum_r  \prod_{i=1}^r\frac{k_i!}{\prod_{j=1}^n  q_{ij}!}
\end{align*}
with the exponents $s_j$, $j=1,2,\ldots,n$, defined by \eqref{exponentp}.
Now, we observe that
\begin{align*}
\sum_i \frac{k_i!}{\prod_{j=1}^n q_{ij}!}&=\sum_{q_{i1}+q_{i2}+\ldots+q_{in}=k_i} \binom{k_i}{q_{i1}\, q_{i2}\,\ldots \, q_{in}} \\ & =(1+1+\ldots+1)^{k_i}=n^{k_i} \qquad i=1,2,\ldots,r,
\end{align*}
from which it follows that
\begin{eqnarray*}
\sum_1\sum_2 \cdots \sum_r  \prod_{i=1}^r\frac{k_i!}{\prod_{j=1}^n  q_{ij}!}& = &\sum_1 \frac{k_1!}{\prod_{j=1}^n q_{1j}!}
\sum_2 \frac{k_2!}{\prod_{j=1}^n q_{2j}!}\ldots\sum_r \frac{k_r!}{\prod_{j=1}^n q_{rj}!} \\
&=& n^{k_1+k_2+\ldots+k_r}=n^k\leq n^r
\end{eqnarray*}
and, consequently,
\begin{align*}
& \left|(f \circ g)^{(r)}(x)\varphi^r(x)u^r(x)\right|  \\ & \leq \C n^r \|f\|_{{\bm W}^r(\Omega)} \prod_{j=1}^n\left\|g_j\right\|_{W^r_u}^{s_j}
\displaystyle \sum_0 \frac{r!}{k_1!k_2!\cdots,k_r!}\displaystyle \prod_{i=1}^r\frac{1}{(i!)^{k_i}}\\
& = \C n^r \|f\|_{{\bm W}^r(\Omega)} \prod_{j=1}^n\left\|g_j\right\|_{W^r_u}^{s_j}
\displaystyle \sum_{k=1}^r B_{r,k}(1,1,\ldots,1) \\ & = \C n^r B_r \|f\|_{{\bm W}^r(\Omega)} \prod_{j=1}^n\left\|g_j\right\|_{W^r_u}^{s_j},
\end{align*}
where $\C$ is a postive constant independent of $f$ and $g$, i.e. the thesis \eqref{EstDerCompFun}.
\end{proof}

By combining \eqref{EstDerCompFun} with \eqref{favard} we can immediately deduce  an estimate for $E_m(f \circ g)_{u}$.\\

\begin{theorem}
Let $f:\Omega \rightarrow \RR$ with $\Omega$ open subset of $\RR^n$ and $g:(-1,1) \rightarrow \RR^n$ such that $\mathrm{Im}(g) \subseteq \Omega$. Moreover, let $u(x)=(1-x)^{\gamma}(1+x)^\delta$ be a Jacobi weight with $0 \leq \gamma,\delta <1$. If we assume that $f \in  {\bm W}^{r}(\Omega)$ and $g=\left(g_1,g_2,\ldots,g_n\right)$ with $g_j \in W^r_{\sqrt[r]u}$, for $j=1,2,\ldots,n$, then
\begin{equation*}
E_m(f \circ g)_{u} \leq \frac{\mathcal{C}}{m^r}  n^r B_r \|f\|_{{\bm W}^{r}(\Omega)} \prod_{j=1}^n \|g_j\|_{W^r_{\sqrt[r]u}}^{s_j}
\end{equation*}
where $\C=\C(r,\gamma,\delta)$ is a positive constant independent of $f$ and $g$, $B_r$ is the $r$-th Bell number, and the exponents $s_j$, $j=1,2,\ldots,n$, are given in \eqref{exponentp} (with $u$ replaced with $\sqrt[r]u$).
\end{theorem}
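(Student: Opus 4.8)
The plan is to obtain the estimate as a direct consequence of the Favard inequality \eqref{favard} applied to the composite function $f\circ g$, combined with the bound on the weighted $r$-th derivative of a composition established in the previous theorem, inequality \eqref{EstDerCompFun}. First I would apply \eqref{favard} with $f$ there replaced by $f\circ g$, which gives
\begin{equation*}
E_m(f\circ g)_u \leq \frac{\mathcal{C}}{m^r}\,\|(f\circ g)^{(r)}\varphi^r u\|_\infty,
\end{equation*}
thereby reducing the problem to estimating $\|(f\circ g)^{(r)}\varphi^r u\|_\infty$ in terms of the norms of $f$ and of the components $g_j$.

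The only point requiring attention is the bookkeeping of the weights. Inequality \eqref{EstDerCompFun} is stated for a generic Jacobi weight, say $w$, with exponents in $[0,1)$, and its left-hand side carries the weight $w^r$ rather than $w$. To make the outer weight coincide with the target weight $u$, I would apply \eqref{EstDerCompFun} with the inner weight chosen as $w=\sqrt[r]u$, i.e. $w(x)=(1-x)^{\gamma/r}(1+x)^{\delta/r}$. Since $r\geq 1$ and $0\leq\gamma,\delta<1$, the exponents $\gamma/r$ and $\delta/r$ again lie in $[0,1)$, so $w$ is an admissible Jacobi weight for the previous theorem; moreover $w^r=u$. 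With this choice the hypothesis $g_j\in W^r_w=W^r_{\sqrt[r]u}$ is precisely the one assumed here.

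Applying the previous theorem with $w=\sqrt[r]u$ then yields $f\circ g\in W^r_{w^r}=W^r_u$, so that the Favard inequality is legitimately applicable to $f\circ g$, together with
\begin{equation*}
\|(f\circ g)^{(r)}\varphi^r u\|_\infty \leq \mathcal{C}\,n^r B_r\,\|f\|_{{\bm W}^r(\Omega)}\prod_{j=1}^n\|g_j\|_{W^r_{\sqrt[r]u}}^{s_j},
\end{equation*}
where the exponents $s_j$ are those of \eqref{exponentp} with $u$ replaced by $\sqrt[r]u$. Substituting this bound into the Favard estimate of the first paragraph produces the claimed inequality. I do not expect a genuine obstacle: the statement is an immediate corollary of \eqref{favard} and \eqref{EstDerCompFun}, and the only nontrivial verification is the admissibility of $\sqrt[r]u$ together with the identity $w^r=u$, which guarantees that the outer weight generated by the composition estimate is exactly the weight appearing in $E_m(\cdot)_u$.
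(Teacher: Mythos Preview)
Your proposal is correct and follows exactly the approach the paper indicates: the theorem is obtained by combining the Favard inequality \eqref{favard} with the composition estimate \eqref{EstDerCompFun}, applied with the inner weight $\sqrt[r]{u}$ so that the outer weight becomes $u$. Your explicit verification that $\sqrt[r]{u}$ has exponents in $[0,1)$ and that $f\circ g\in W^r_u$ (so Favard applies) in fact spells out more detail than the paper itself provides.
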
\\

\begin{corollary}
	In the special case $n=1$ the previous estimate is simplified as follows
	\begin{equation*}
		E_m(f \circ g)_{u} \leq \frac{\mathcal{C}}{m^r} B_r \|f\|_{{W}^{r}(\Omega)} \|g\|_{W^r_{\sqrt[r]u}}^{s_1}.
	\end{equation*}
\end{corollary}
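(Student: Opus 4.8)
The plan is to derive the corollary as the scalar specialization $n=1$ of the preceding theorem, so essentially all I would do is substitute $n=1$ into the bound already established there and simplify the three quantities that depend on $n$. First I would observe that when $n=1$ the vector $g=(g_1)$ collapses to a single scalar-valued function, which I rename $g=g_1$, so that the product $\prod_{j=1}^n\|g_j\|_{W^r_{\sqrt[r]u}}^{s_j}$ reduces to the single factor $\|g\|_{W^r_{\sqrt[r]u}}^{s_1}$, with $s_1$ still given by \eqref{exponentp} (and $u$ replaced by $\sqrt[r]u$). Next, the combinatorial prefactor satisfies $n^r=1^r=1$ and therefore disappears from the estimate.

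The only point that requires a line of verification is the identification of the multivariate Sobolev norm with the one-dimensional one. When $\Omega$ is an open subset of $\RR^1=\RR$, the $n$-tuples $\ell$ with $|\ell|\le r$ are simply the integers $0,1,\dots,r$, so $D^\ell f=f^{(|\ell|)}$ and the norm \eqref{SobolevNormMultiD} becomes
\[
\|f\|_{{\bm W}^{r}(\Omega)}=\|f\|_\infty+\sum_{k=1}^r\|f^{(k)}\|_\infty=\|f\|_{W^r(\Omega)},
\]
which justifies writing $\|f\|_{W^r(\Omega)}$ in place of $\|f\|_{{\bm W}^r(\Omega)}$. With these three simplifications the theorem's bound
\[
E_m(f \circ g)_{u} \leq \frac{\mathcal{C}}{m^r}\,n^r B_r\, \|f\|_{{\bm W}^{r}(\Omega)} \prod_{j=1}^n \|g_j\|_{W^r_{\sqrt[r]u}}^{s_j}
\]
specializes verbatim to the claimed inequality. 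Since the genuine work—Fa\`a di Bruno's formula \eqref{Faa_multidimensional}, the weighted derivative estimate of the Lemma, and the Favard inequality \eqref{favard}—was all carried out in proving the theorem, I expect no real obstacle here; the corollary is immediate once $n=1$ is inserted, and the harmless verification above is the only step worth recording.
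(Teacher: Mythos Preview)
Your proposal is correct and matches the paper's own treatment: the paper states the corollary without proof, regarding it as an immediate specialization of the preceding theorem to $n=1$, which is exactly what you do. Your extra line identifying $\|f\|_{{\bm W}^{r}(\Omega)}$ with $\|f\|_{W^{r}(\Omega)}$ in one dimension is a harmless (and welcome) clarification that the paper leaves implicit.
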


\end{document}